\newcommand{\Hom}{\normalfont\mbox{Hom}\,}
\newcommand{\Ext}{\normalfont\mbox{Ext}\,}
\newcommand{\Tor}{\normalfont\mbox{Tor}\,}
\newcommand{\pd}{\normalfont\mbox{pd}\,}
\newcommand{\id}{\normalfont\mbox{id}\,}
\newcommand{\GP}{\mathcal{GP}\,}
\newcommand{\GI}{\mathcal{GI}\,}
\newcommand{\fd}{\normalfont\mbox{fd}\,}
\newcommand{\Ker}{\normalfont\mbox{Ker}\,}
\newcommand{\gd}{\mbox{Gdim}\,}
\newcommand{\coker}{\normalfont\mbox{coker}\,}
\newcommand{\wdim}{\normalfont\mbox{wdim}\,}
\newcommand{\gldim}{\normalfont\mbox{gldim}\,}
\newcommand{\wGgldim}{\normalfont\mbox{wGgldim}\,}
\newcommand{\Ggldim}{\normalfont\mbox{Ggldim}\,}
\newcommand{\Gpd}{\normalfont\mbox{Gpd}\,}
\newcommand{\Gid}{\normalfont\mbox{Gid}\,}
\newcommand{\FPD}{\normalfont\mbox{FPD}\,}
\newcommand{\Gfd}{\normalfont\mbox{Gfd}\,}
\newcommand{\im}{\normalfont\mbox{Im}\,}
\theoremstyle{plain}
\newtheorem{theorem}{Theorem}[section]
\newtheorem{proposition}[theorem]{Proposition}
\newtheorem{corollary}[theorem]{Corollary}
\theoremstyle{definition}
\newtheorem{definition}[theorem]{Definition}
\theoremstyle{Definition and Notation}
\begin{document}

\title[]{The Right Orthogonal Class $\GP(R)^{\perp}$ via $\Ext$}

\author{Mohammed Tamekkante}
\address{Mohammed Tamekkante\\Department of Mathematics, Faculty of Science and
Technology
of Fez, Box 2202, University S.M. Ben Abdellah Fez, Morocco.}
\email{tamekkante@yahoo.fr}

\keywords{Cotorsion theory; Gorenstein homological dimensions of modules and rings.}

\subjclass[2000]{13D05, 13D02}
\begin{abstract} In this paper, we study the pair $(\GP(R),\GP(R)^{\perp})$ where $\GP(R)$
is the class of all Gorenstein projective modules. We prove that it is complete hereditary
cotorsion theory provided $l.\Ggldim(R)<\infty$. We discuss also, when every Gorenstein
projective module is Gorenstein flat.
\end{abstract}
\maketitle
\section{Introduction}
Throughout this paper, $R$ denotes a non-trivial associative ring  and all
modules -if not specified otherwise-  are left and unitary. The definitions and
notations employed in this paper are based on those introduced by
Holm in \cite{Holm}.\\
Let $R$ be a ring, and let $M$ be an $R$-module. As usual we use
$\pd_R(M)$, $\id_R(M)$ and $\fd_R(M)$ to denote, respectively, the
classical projective dimension, injective dimension and flat
dimension of $M$. We use also $\gldim(R)$ and $\wdim(R)$ to denote, respectively, the
classical global and weak dimension of $R$.\\

For a two-sided Noetherian ring $R$, Auslander and Bridger
\cite{Aus bri} introduced the $G$-dimension, $\gd_R (M)$, for
every finitely generated $R$-module $M$. They showed that there is
an inequality $\gd_R (M)\leq \pd_R (M)$ for all finite $R$-modules
$M$, and equality holds if $\pd_R (M)$ is finite.

Several decades later, Enochs and Jenda \cite{Enochs,Enochs2}
defined the notion of Gorenstein projective dimension
($G$-projective dimension for short), as an extension of
$G$-dimension to modules which are not necessarily finitely
generated, and the Gorenstein injective dimension ($G$-injective
dimension for short) as a dual notion of Gorenstein projective
dimension. Then, to complete the analogy with the classical
homological dimension, Enochs, Jenda and Torrecillas \cite{Eno
Jenda Torrecillas} introduced the Gorenstein flat dimension. Some
references are
 \cite{Christensen
and Frankild, Enochs, Enochs2, Eno Jenda Torrecillas, Holm}.\\

Recall that an $R$-module $M$ is called
Gorenstein projective if, there exists an exact sequence of
projective  $R$-modules:
$$\mathbf{P}:...\rightarrow P_1\rightarrow P_0\rightarrow
P^0\rightarrow P^1\rightarrow ...$$ such that $M\cong
\im(P_0\rightarrow P^0)$ and such that the operator $\Hom_R(-,Q)$
leaves $\mathbf{P}$ exact whenever $Q$ is a
projective. The resolution $\mathbf{P}$ is called a complete projective resolution. \\
The Gorenstein injective $R$-modules are defined
dually.\\
 And an $R$-module $M$ is called
Gorenstein flat if, there exists an exact sequence of flat
 $R$-modules:
$$\mathbf{F}:...\rightarrow F_1\rightarrow F_0\rightarrow
F^0\rightarrow F^1\rightarrow ...$$ such that $M\cong
\im(P_0\rightarrow P^0)$ and such that the operator $I\otimes_R-$
leaves $F$ exact whenever $I$ is a right
 injective $R$-module. The resolution $\mathbf{F}$ is called complete flat resolution.\\

The  Gorenstein projective, injective and flat
dimensions are defined in term of resolution and denoted by $\Gpd(-)$, $\Gid(-)$ and
$\Gfd(-)$ respectively (see \cite{Christensen, Enocks and janda, Holm}).\\

$\mathbf{Notation.}$ By  $\mathcal{P}(R)$ and  $\mathcal{I}(R)$ we denote the classes of
all projective and
injective  $R$-modules respectively and by $\overline{\mathcal{P}(R)}$ and
$\overline{\mathcal{I}(R)}$ we denote the classes of all modules with finite projective
dimension and injective dimension respectively. Furthermore, we let
 $\mathcal{GP}(R)$ and
$\mathcal{GI}(R)$  denote the classes of all
 Gorenstein projective and  injective  $R$-modules
respectively. The character module $\Hom_{\mathbb{Z}}(M,\mathbb{Q}/\mathbb{Z})$ is
denoted by $M^+$.\\

In \cite{Bennis and Mahdou2}, the authors prove the equality:
$$\sup\{\Gpd_R(M)\mid \text{$M$ is an $R$-module}\}=\sup\{\Gid_R(M)\mid \text{$M$ is an
$R$-module}\}$$
They called the common value of the above quantities the left
Gorenstein global dimension of $R$ and denoted it by
$l.\Ggldim(R)$. Similarly, they set
$$l.\wGgldim(R)=\sup\{\Gfd_R(M)\mid \text{$M$ is an  $R$-module}\}$$
which they called the left weak Gorenstein global dimension of
$R$.\\

Given a class $\mathfrak{X}$ of $R$-module we set:
$$\begin{array}{ccccc}
  \mathfrak{X}^{\perp}&=& \Ker \Ext_R^1(\mathfrak{X},-)&=&\{M\mid \Ext_R^1(X,M)=0 \text{
  for all } X\in \mathfrak{X}\}, \\
  ^{\perp}\mathfrak{X}&=& \Ker \Ext_R^1(-,\mathfrak{X})&=&\{M\mid \Ext_R^1(M,X)=0 \text{
  for all } X\in \mathfrak{X}\}$$
\end{array}$$

\begin{definition}[\textbf{Precovers and Preenvelopes}]
Let $\mathfrak{X}$ be any class of $R$-modules, and let $M$ be an $R$-module.
\begin{itemize}
  \item An
$\mathfrak{X}$-precover of $M$ is an $R$-homomorphism $\varphi:X \rightarrow M$, where
$X\in \mathfrak{X}$, and such that the sequence,
$$\begin{CD}
\Hom_R(X',X)@>\Hom_R(X',\varphi)>> \Hom_R(X',M)@>>>0
\end{CD}$$
is exact for every $X'\in \mathfrak{X}$. An $\mathfrak{X}$-precover is called special
if $\varphi$ is surjective and $\ker(\varphi)\in \mathfrak{X}^{\perp}$.
  \item An
$\mathfrak{X}$-preenvelope  of $M$ is an $R$-homomorphism $\varphi:M \rightarrow X$,
where $X\in \mathfrak{X}$, and such that the sequence,
$$\begin{CD}
\Hom_R(X,X')@>\Hom_R(\varphi,X')>> \Hom_R(M,X')@>>>0
\end{CD}$$
is exact for every $X'\in \mathfrak{X}$. An $\mathfrak{X}$-preenvelope  is called
special if $\varphi$ is injective   and $\coker(\varphi)\in$
$^{\perp}\mathfrak{X}$.
\end{itemize}
\end{definition}
For more details about precovers (and preenvelopes), the reader may consult \cite[Chapters 5 and 6]{Enocks and janda}.
\begin{definition}[\textbf{Resolving classes 1.1} ,\cite{Holm}] For any class
$\mathfrak{X}$ of $R$-modules.
\begin{itemize}
  \item We call $\mathfrak{X}$ projectively resolving if $\mathcal{P}(R)\subseteq
      \mathfrak{X}$, and for every short exact sequence
$0 \longrightarrow X' \longrightarrow X \longrightarrow X"
\longrightarrow 0$ with $X" \in \mathfrak{X}$ the conditions $X'
\in \mathfrak{X}$ and $X \in \mathfrak{X}$ are equivalent.
  \item We call $\mathfrak{X}$ injectively resolving if $\mathcal{I}(R)\subseteq
      \mathfrak{X}$, and for every short exact sequence
$0 \longrightarrow X' \longrightarrow X \longrightarrow X"
\longrightarrow 0$ with $X' \in \mathfrak{X}$ the conditions $X"
\in \mathfrak{X}$ and $X \in \mathfrak{X}$ are equivalent.
\end{itemize}
\end{definition}
A pair $(\mathfrak{X},\mathfrak{Y})$ of classes of $R$-modules is called a cotorsion
theory (\cite{Enocks and janda}) if $\mathfrak{X}^{\perp}=\mathfrak{Y}$ and
$^{\perp}\mathfrak{Y}=\mathfrak{X}$. In this case we call $\mathfrak{X}\cap \mathfrak{Y}$
the kernel of  $(\mathfrak{X},\mathfrak{Y})$. Note that each element $K$ of the kernel is
a splitter in the sense of \cite{Gobel}, i.e., $\Ext_R^1(K,K) = 0$. If $\mathfrak{C}$ is
any class of modules, then $(^{\perp}\mathfrak{C},(^{\perp}\mathfrak{C})^{\perp})$ is easy
seen be a cotorsion theory, called a cotorsion theory generated by $\mathfrak{C}$ (see
please \cite[Definition 1.10]{Trlifaj}). A cotorsion theory $(\mathfrak{X},\mathfrak{Y})$
is called complete (\cite{Trlifaj})  if every $R$-module has a special
$\mathfrak{Y}$-preenvelope and a special $\mathfrak{X}$-precover. A cotorsion
theory $(\mathfrak{X},\mathfrak{Y})$ is said to be hereditary (\cite{Garcia}) if whenever
$0 \rightarrow L'\rightarrow L \rightarrow L'' \rightarrow 0$  is exact with $L,L''\in
\mathfrak{X}$ then $L'$ is also in $\mathfrak{X}$,
or equivalently, if $0\rightarrow M'\rightarrow M  \rightarrow M" \rightarrow 0$ is exact $M',M\in
\mathfrak{Y}$ then $M''$ is also in $\mathfrak{Y}$.\\

\textbf{Note:} Above we have only proved   the results concerning Gorenstein projective
modules. The proof of the Gorenstein injective ones is dual and we can find a dual of
results using in the proofs in \cite{Holm}.

\section{main results}
The aim of this section is the study of the pair $(\GP(R),\GP(R)^{\perp})$.\\
The class $\GP(R)$ verified the following properties.

\begin{theorem}\label{thm1}
For any ring $R$ the following holds:
\begin{enumerate}
 \item $\Ext_R^i(G,M)=0$ for all $i>0$, all $G\in \GP(R)$ and all $M\in
     \GP(R)^{\perp}$.
 \item $\Ext_R^i(M,G)=0$ for all $i>0$, all $G\in \GI(R)$ and all $M\in$
     $^{\perp}\GI(R)$.
\item $\GP(R)^{\perp}$ and $^{\perp}\GI(R)$ are  projectively resolving.
  \item $\GP(R)^{\perp}$ and $^{\perp}\GI(R)$ are injectively resolving.
  \end{enumerate}
\end{theorem}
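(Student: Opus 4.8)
The plan is to extract everything from statement (1), which upgrades the defining condition $\Ext_R^1(G,M)=0$ (for $G\in\GP(R)$, $M\in\GP(R)^{\perp}$) to vanishing of all higher $\Ext$. The engine is the fact, immediate from the complete projective resolution $\mathbf{P}$, that $\GP(R)$ is closed under syzygies and cosyzygies: for every $G\in\GP(R)$ there are short exact sequences $0\to\Omega G\to P_0\to G\to 0$ and $0\to G\to P^0\to\Sigma G\to 0$ with $P_0,P^0$ projective and $\Omega G,\Sigma G\in\GP(R)$ (take the images occurring one step to the left, resp. to the right, in $\mathbf{P}$). I also record the standard vanishing $\Ext_R^{i}(G,Q)=0$ for $i>0$ and $Q$ projective, which is precisely the exactness of $\Hom_R(\mathbf{P},Q)$.

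For (1), I would dimension-shift along the first sequence. Applying $\Hom_R(-,M)$ to $0\to\Omega G\to P_0\to G\to 0$ and using that $P_0$ is projective gives $\Ext_R^{i+1}(G,M)\cong\Ext_R^{i}(\Omega G,M)$ for every $i\ge 1$. An induction on $i$, whose inductive statement is quantified over all $G\in\GP(R)$ so that it applies to $\Omega G\in\GP(R)$, then yields $\Ext_R^i(G,M)=0$ for all $i>0$; the base case $i=1$ is exactly the hypothesis $M\in\GP(R)^{\perp}$. Statement (2) is the formal dual of (1) (apply $\Hom_R(M,-)$ and shift along a cosyzygy of the Gorenstein injective module), so I would treat only the projective side in detail, the injective statements following dually.

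For (3) I first note $\mathcal{P}(R)\subseteq\GP(R)^{\perp}$, which is the vanishing recorded above, while $\mathcal{I}(R)\subseteq\GP(R)^{\perp}$ is automatic. Given $0\to X'\to X\to X''\to 0$ with $X''\in\GP(R)^{\perp}$, applying $\Hom_R(G,-)$ and reading $\Ext_R^1(G,X')\to\Ext_R^1(G,X)\to\Ext_R^1(G,X'')=0$ gives at once $X'\in\GP(R)^{\perp}\Rightarrow X\in\GP(R)^{\perp}$ (closure under extensions). The converse is the delicate point, and the expected main obstacle: the long exact sequence only produces $\Ext_R^1(G,X')\cong\coker\big(\Hom_R(G,X)\to\Hom_R(G,X'')\big)$, which no direct diagram chase kills. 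Here I would invoke the cosyzygy: from $0\to G\to P^0\to\Sigma G\to 0$ and projectivity of $P^0$ one gets $\Ext_R^1(G,X')\cong\Ext_R^2(\Sigma G,X')$, and then the long exact sequence of $\Hom_R(\Sigma G,-)$ on $0\to X'\to X\to X''\to 0$ squeezes this group between $\Ext_R^1(\Sigma G,X'')$ and $\Ext_R^2(\Sigma G,X)$, both zero by (1) since $X,X''\in\GP(R)^{\perp}$ and $\Sigma G\in\GP(R)$. Hence $\Ext_R^1(G,X')=0$ for every $G$, i.e. $X'\in\GP(R)^{\perp}$.

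Statement (4) is more direct. As every injective lies in $\GP(R)^{\perp}$ and the class is extension-closed, only the implication $X\in\GP(R)^{\perp}\Rightarrow X''\in\GP(R)^{\perp}$ (for a sequence with $X'\in\GP(R)^{\perp}$) needs attention; the long exact sequence $\Ext_R^1(G,X)\to\Ext_R^1(G,X'')\to\Ext_R^2(G,X')$ settles it immediately, the outer terms vanishing because $X\in\GP(R)^{\perp}$ and, via (1), $\Ext_R^2(G,X')=0$. Thus the whole difficulty is concentrated in the single converse direction of (3): it is the only place where the naive $\Ext^1$ sequence stalls at a connecting map, and the cosyzygy shift powered by (1) is exactly what circumvents it.
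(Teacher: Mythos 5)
Your proposal is correct and follows essentially the same route as the paper: dimension shifting along syzygies (which stay Gorenstein projective) for (1), the cosyzygy sequence $0\to G\to P\to G'\to 0$ combined with the vanishing from (1) to handle the kernel direction of (3), and the straightforward long exact sequence argument for (4), with the Gorenstein injective statements treated dually. The only cosmetic difference is that you organize (1) as an induction on $i$ while the paper takes an $n$-step syzygy sequence in one shot; the substance is identical.
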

\begin{proof}
$(1)$. Consider $M\in \GP(R)^{\perp}$. For any Gorenstein projective module $G$ and any
$n>1$ pick an exact sequence $0\rightarrow G' \rightarrow P_1\rightarrow ...\rightarrow
P_n\rightarrow G\rightarrow 0$ where all $P_i$ are projective. Clearly,  $G'$ is also
Gorenstein projective (\cite[Theorem 2.5]{Holm}). So, we have, $\Ext_R^n(G,M)=\Ext_R^1(G',M)=0$, as desired.\\
$(2)$. Dual to $(1)$.\\
$(3)$. We claim that $\GP(R)^{\perp}$ is  projectively resolving. Using the long exact sequence in homology, we conclude that $ \GP(R)^{\perp}$ is
closed by extension, i.e., if $0\rightarrow M\rightarrow M' \rightarrow M'' \rightarrow 0$
where $M$ and $M''$ are in $\GP(R)^{\perp}$ then so is $M'$. Clearly
$\mathcal{P}(R)\subseteq \GP(R)^{\perp}$ (\cite[Proposition 2.3]{Holm}).
Now, consider a short exact sequence $0\rightarrow M\rightarrow M' \rightarrow M''
\rightarrow 0$ where $M'$ and $M''$ are in $\GP(R)^{\perp}$. For an arbitrary Gorenstein projective $R$-module $G$ consider a
short exact sequence $0\rightarrow G \rightarrow P \rightarrow G'\rightarrow 0$ where $P$
is projective and $G'$ is Gorenstein projective (such sequence exists by definition of Gorenstein projective modules). From the long exact sequence
of homology, we have
$$..\rightarrow \Ext_R^1(G',M'')\rightarrow \Ext_R^2(G',M)\rightarrow
\Ext_R^2(G',M')\rightarrow ...$$
Then, $\Ext_R^2(G',M)=0$  since $\Ext_R^1(G',M'')=\Ext_R^2(G',M')=0$ (from $(1)$). Thus, $\Ext_R^1(G,M)=\Ext_R^2(G',M)=0$,
as desired.\\
$(4)$ We claim that $\GP(R)^{\perp}$ is injectively resolving. Clearly, $\mathcal{I}(R)\subseteq  \GP(R)^{\perp}$ and $ \GP(R)^{\perp}$ is closed
by extension. Now, consider a short exact sequence $0\rightarrow M \rightarrow
M'\rightarrow M''\rightarrow 0$ where $M$ and $M'$ are in $ \GP(R)^{\perp}$. Using the
long exact sequence of homology and for all Gorenstein projective module $G$, we have
$$..\rightarrow \Ext_R^1(G,M')\rightarrow \Ext_R^1(G,M'')\rightarrow
\Ext_R^2(G,M)\rightarrow ...$$ Thus, from $(1)$, $\Ext_R^1(G,M'')=0$ for all Gorenstein
projective module as desired.
\end{proof}
Hence, we conclude the following two Corollarys. The second once was proved by Holm in
\cite{Holm}.
\begin{corollary}\label{coro1}For any ring $R$,
\begin{enumerate}
  \item  $\mathcal{P}(R)=\GP(R)\cap \GP(R)^{\perp}$.
  \item $\mathcal{I}(R)=\GI(R)\cap \GI(R)^{\perp}$.
\end{enumerate}

\end{corollary}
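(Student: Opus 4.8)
The plan is to prove both equalities by the same splitting argument, treating $(1)$ in full and obtaining $(2)$ by dualizing. For $(1)$, one inclusion is essentially recorded already: every projective module is Gorenstein projective by \cite[Proposition 2.3]{Holm}, and it lies in $\GP(R)^{\perp}$ because Gorenstein projective modules have vanishing higher $\Ext$ against modules of finite projective dimension -- this is exactly the containment $\mathcal{P}(R)\subseteq\GP(R)^{\perp}$ already noted in the proof of Theorem \ref{thm1}$(3)$. Hence $\mathcal{P}(R)\subseteq \GP(R)\cap\GP(R)^{\perp}$, and I would dispose of this direction first.

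For the reverse inclusion I would start from an arbitrary $M\in\GP(R)\cap\GP(R)^{\perp}$ and exploit the complete projective resolution defining it. Since $M$ is Gorenstein projective it is the image $\im(P_0\rightarrow P^0)$ of such a resolution, so there is a short exact sequence
\[0\rightarrow M\rightarrow P^0\rightarrow G'\rightarrow 0\]
in which $P^0$ is projective and $G'=\coker(M\rightarrow P^0)$ is again Gorenstein projective, being the next cosyzygy of the complete resolution. The decisive point is then that $M\in\GP(R)^{\perp}$ and $G'\in\GP(R)$ force $\Ext_R^1(G',M)=0$, so this sequence splits; consequently $M$ is a direct summand of the projective module $P^0$, hence projective. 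This gives $\GP(R)\cap\GP(R)^{\perp}\subseteq\mathcal{P}(R)$ and completes $(1)$.

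For $(2)$ I would run the dual argument. Here the easy inclusion is even more immediate: any injective module is Gorenstein injective and automatically lies in $\GI(R)^{\perp}$, since $\Ext_R^1(-,N)=0$ whenever $N$ is injective; thus $\mathcal{I}(R)\subseteq\GI(R)\cap\GI(R)^{\perp}$. For the converse, given $M\in\GI(R)\cap\GI(R)^{\perp}$ I would use the complete injective resolution of $M$ to produce a short exact sequence $0\rightarrow M\rightarrow I\rightarrow G''\rightarrow 0$ with $I$ injective and $G''$ Gorenstein injective, observe that $\Ext_R^1(G'',M)=0$ because $M\in\GI(R)^{\perp}$, and conclude that the sequence splits, so that $M$ is a direct summand of $I$ and hence injective.

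The main obstacle -- indeed the only non-formal point -- is justifying that a Gorenstein projective $M$ fits into a short exact sequence $0\rightarrow M\rightarrow P^0\rightarrow G'\rightarrow 0$ with $P^0$ projective and $G'$ again Gorenstein projective, and dually in the injective case. This is read off directly from the complete resolution defining Gorenstein projectivity: the cokernel of the inclusion $M\hookrightarrow P^0$ is the next cosyzygy of that resolution, which is itself Gorenstein projective. Granting this, the vanishing $\Ext_R^1(G',M)=0$ is immediate from the definition of $\GP(R)^{\perp}$ (it is also Theorem \ref{thm1}$(1)$), and the remainder is the elementary observation that a short exact sequence whose quotient has vanishing $\Ext^1$ into the submodule splits, together with the fact that direct summands of projectives (resp. injectives) are projective (resp. injective).
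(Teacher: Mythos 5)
Your proposal is correct, but it takes a genuinely different route from the paper's proof. The paper presents $M\in\GP(R)\cap\GP(R)^{\perp}$ as a quotient of a projective, $0\rightarrow M'\rightarrow P\rightarrow M\rightarrow 0$, invokes Theorem \ref{thm1}(3) (that $\GP(R)^{\perp}$ is projectively resolving) to conclude $M'\in\GP(R)^{\perp}$, and then splits the sequence via $\Ext_R^1(M,M')=0$; there it is the Gorenstein projectivity of $M$ that kills the $\Ext$ group, while membership of $M$ in $\GP(R)^{\perp}$ enters only through the resolving property. You instead embed $M$ into a projective via the cosyzygy sequence $0\rightarrow M\rightarrow P^0\rightarrow G'\rightarrow 0$ supplied by the complete resolution and split it via $\Ext_R^1(G',M)=0$; the roles are exchanged --- Gorenstein projectivity of $M$ furnishes the sequence, and $M\in\GP(R)^{\perp}$ furnishes the vanishing. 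Your variant is more self-contained: it needs only the definitional fact that cosyzygies in a complete resolution are again Gorenstein projective, and does not rely on Theorem \ref{thm1}(3) at all, whereas the paper's version makes the result a genuine corollary of that theorem. One further consequence of the difference is worth flagging: in part (2) your cosyzygy argument proves $\mathcal{I}(R)=\GI(R)\cap\GI(R)^{\perp}$ exactly as printed, while the literal dual of the paper's argument (embed $M$ into an injective and use Theorem \ref{thm1}(4)) proves $\mathcal{I}(R)=\GI(R)\cap{}^{\perp}\GI(R)$. Both identities are true, but the latter is the one actually used in the paper's subsequent corollary on modules of finite projective or injective dimension, so the printed $\GI(R)^{\perp}$ is presumably a misprint for ${}^{\perp}\GI(R)$; your proof has the small bonus of validating the statement as literally written.
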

\begin{proof} $(1).$ Consider $M\in \GP(R)\cap \GP(R)^{\perp}$ and for such module $M$
consider a short exact sequence $0\rightarrow M' \rightarrow P \rightarrow M \rightarrow
0$ where $P$ is projective. Since $\GP(R)^{\perp}$ is projectively resolving (from Theorem
\ref{thm1}), $M'\in \GP(R)^{\perp}$. Then, $\Ext_R(M,M')=0$ and so this short exact
sequence splits. Therefore, $M$ is a direct summand of $P$ and so projective, as
desired.\\
$(2).$ Dual proof.
\end{proof}
\begin{corollary}
\begin{enumerate}
  \item \cite[Proposition 2.27]{Holm} Every Gorenstein projective (resp., injective)
      module with finite projective (resp., injective) dimension is projective (resp.,
      injective).
  \item Every Gorenstein projective (resp., injective)  module with finite injective
      (resp., projective) dimension is projective (resp., injective).

\end{enumerate}

\end{corollary}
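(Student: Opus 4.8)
The plan is to deduce both statements from Corollary \ref{coro1} by establishing the two orthogonality containments $\overline{\mathcal{P}(R)}\subseteq\GP(R)^{\perp}$ and $\overline{\mathcal{I}(R)}\subseteq\GP(R)^{\perp}$. Once these are known, a Gorenstein projective module $M$ of finite projective (resp. injective) dimension lies in $\GP(R)\cap\GP(R)^{\perp}$, which equals $\mathcal{P}(R)$ by Corollary \ref{coro1}$(1)$, so $M$ is projective. The point of this reduction is that the two containments are exactly what the resolving properties of Theorem \ref{thm1} are designed to deliver.

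First I would prove $\overline{\mathcal{P}(R)}\subseteq\GP(R)^{\perp}$ by induction on $\pd_R(M)=:m$. For $m=0$ the module is projective, and $\mathcal{P}(R)\subseteq\GP(R)^{\perp}$ since $\GP(R)^{\perp}$ is projectively resolving (Theorem \ref{thm1}$(3)$). For $m\geq 1$, choose a short exact sequence $0\rightarrow K\rightarrow Q\rightarrow M\rightarrow 0$ with $Q$ projective and $\pd_R(K)=m-1$; by the induction hypothesis $K\in\GP(R)^{\perp}$, and $Q\in\GP(R)^{\perp}$ as well. Because $\GP(R)^{\perp}$ is injectively resolving (Theorem \ref{thm1}$(4)$), the submodule $K$ lying in the class makes the two conditions $Q\in\GP(R)^{\perp}$ and $M\in\GP(R)^{\perp}$ equivalent, whence $M\in\GP(R)^{\perp}$. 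This settles statement $(1)$.

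The containment $\overline{\mathcal{I}(R)}\subseteq\GP(R)^{\perp}$ is the genuinely new input, and it is the step I expect to be the main obstacle, since finite injective dimension gives no a priori control over $\Ext_R^1(G,M)$ for a Gorenstein projective $G$. I would again argue by induction, now on $\id_R(M)=:n$, but this time exploiting the projectively resolving property. For $n=0$ the module is injective, so $\Ext_R^i(-,M)=0$ for all $i>0$ and in particular $M\in\GP(R)^{\perp}$. For $n\geq 1$, embed $M$ in an injective module, $0\rightarrow M\rightarrow I\rightarrow C\rightarrow 0$ with $\id_R(C)=n-1$; by induction $C\in\GP(R)^{\perp}$, and $I\in\GP(R)^{\perp}$. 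Since $\GP(R)^{\perp}$ is projectively resolving (Theorem \ref{thm1}$(3)$), the quotient $C$ lying in the class makes $I\in\GP(R)^{\perp}$ and $M\in\GP(R)^{\perp}$ equivalent, forcing $M\in\GP(R)^{\perp}$. Conceptually this repackages the cosyzygy shift $\Ext_R^1(G,M)\cong\Ext_R^{\,n+1}(G^{n},M)=0$, where $G^{n}$ is the $n$-th (Gorenstein projective) cosyzygy of $G$ coming from a complete projective resolution and the last group vanishes because $\id_R(M)=n$; the resolving-class formulation simply avoids tracking cosyzygies explicitly.

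With both containments available, statement $(1)$ follows from $\GP(R)\cap\overline{\mathcal{P}(R)}\subseteq\GP(R)\cap\GP(R)^{\perp}=\mathcal{P}(R)$ and statement $(2)$ from $\GP(R)\cap\overline{\mathcal{I}(R)}\subseteq\GP(R)\cap\GP(R)^{\perp}=\mathcal{P}(R)$, invoking Corollary \ref{coro1}$(1)$ in each case. The Gorenstein injective ``resp.'' assertions are obtained dually, replacing Theorem \ref{thm1}$(3)$--$(4)$ and Corollary \ref{coro1}$(1)$ by the corresponding statements for $\GI(R)$ and using Corollary \ref{coro1}$(2)$, exactly as the Note preceding Section 2 permits.
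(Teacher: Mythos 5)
Your proof is correct, and its endgame coincides with the paper's: place the module in $\GP(R)\cap\GP(R)^{\perp}$ and apply Corollary \ref{coro1}. The difference lies in how membership in $\GP(R)^{\perp}$ is established. For finite projective dimension the paper simply cites \cite[Proposition 2.3]{Holm}, where you give an induction on $\pd_R(M)$ via Theorem \ref{thm1}(4). For finite injective dimension --- the real content of statement (2) --- the paper shifts on the Gorenstein projective variable: with $n=\id_R(I)$, the right half of a complete projective resolution yields an exact sequence $0\rightarrow G\rightarrow P_{n-1}\rightarrow\cdots\rightarrow P_0\rightarrow G'\rightarrow 0$ with $G'$ Gorenstein projective, whence $\Ext_R^1(G,I)=\Ext_R^{n+1}(G',I)=0$. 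You instead shift on the module itself, by induction on $\id_R(M)$ along an injective coresolution, using that $\GP(R)^{\perp}$ is projectively resolving (Theorem \ref{thm1}(3)). As you note yourself, these are equivalent repackagings of the same $\Ext$-shift performed in opposite variables; your version is entirely self-contained in Theorem \ref{thm1} (no appeal to the structure of complete resolutions or to Holm's paper), while the paper's is shorter (a single shift rather than an induction). One incidental remark: your dual argument reads Corollary \ref{coro1}(2) as $\mathcal{I}(R)=\GI(R)\cap{}^{\perp}\GI(R)$, which is indeed what the paper's dual proof establishes --- the displayed $\GI(R)^{\perp}$ there is evidently a typo for ${}^{\perp}\GI(R)$ --- so your reading is the intended one.
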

\begin{proof}
$(1).$ If $M$ is a Gorenstein projective module with finite projective dimension, then
$M\in \GP(R)\cap \GP(R)^{\perp}$ (from \cite[Proposition 2.3]{Holm}) and then projective (by Corollary \ref{coro1}). The
injective case is dual.\\
$(2).$ Note that every module $I$ with finite injective dimension  is an element of
$\GP(R)^{\perp}$. Indeed, by definition, for every Gorenstein projective module $G$ we can
find an exact sequence $0\rightarrow G \rightarrow P_{n-1}\rightarrow ...\rightarrow
P_0\rightarrow G' \rightarrow 0$ where all $P_i$ are projective and $G'$ is Gorenstein
projective with $n=\id_R(I)$. Thus, we have $\Ext_R^1(G,I)=\Ext_R^{n+1}(G',I)=0$, as
desired.\\
Now, if $M$ is  Gorenstein projective with finite injective dimension then $M\in
\GP(R)\cap \GP(R)^{\perp}$ and then projective (by Corollary \ref{coro1}) .\\
Dually, we can prove that every module with finite projective dimension is an element of
$^{\perp}\GI(R)$. And then every Gorenstein injective module with finite projective
dimension is injective (by Corollary \ref{coro1}).
\end{proof}
The main result of this paper is the following Theorem:
\begin{theorem}
If $l.\Ggldim(R)<\infty$, then $(\GP(R),\GP(R)^{\perp})$ and $(\GI(R),$$^{\perp}\GI(R))$
are complete hereditary  cotorsion theory.
\end{theorem}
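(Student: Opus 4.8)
The plan is to establish the three requirements for $(\GP(R),\GP(R)^{\perp})$ separately — that it is a cotorsion theory, that it is complete, and that it is hereditary — and then to obtain the Gorenstein injective analogue $({}^{\perp}\GI(R),\GI(R))$ by the dual argument announced in the Note above. Heredity is already free: by Theorem \ref{thm1}(4) the class $\GP(R)^{\perp}$ is injectively resolving, so any exact sequence $0\rightarrow M'\rightarrow M\rightarrow M''\rightarrow 0$ with $M',M\in\GP(R)^{\perp}$ forces $M''\in\GP(R)^{\perp}$, which is precisely the hereditary condition. The role of the hypothesis $l.\Ggldim(R)<\infty$ is to guarantee, via the equality recalled in the introduction, that $\Gpd_R(N)<\infty$ for every module $N$; this finiteness is what drives the two remaining points.

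To see that the pair is a cotorsion theory I must prove $^{\perp}(\GP(R)^{\perp})=\GP(R)$, the identity $\GP(R)^{\perp}=\GP(R)^{\perp}$ being automatic. The inclusion $\GP(R)\subseteq{}^{\perp}(\GP(R)^{\perp})$ is immediate from Theorem \ref{thm1}(1). For the reverse inclusion I would take $N\in{}^{\perp}(\GP(R)^{\perp})$ and use the finiteness of $\Gpd_R(N)$ to invoke the Auslander--Buchweitz approximation \cite[Theorem 2.10]{Holm}, yielding a short exact sequence $0\rightarrow K\rightarrow G\rightarrow N\rightarrow 0$ with $G\in\GP(R)$ and $\pd_R(K)<\infty$. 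A module of finite projective dimension belongs to $\GP(R)^{\perp}$ (by a dimension-shift along a complete resolution of the test module, as in the computation in the second Corollary above, or by induction on $\pd_R(K)$ from $\mathcal{P}(R)\subseteq\GP(R)^{\perp}$ using that $\GP(R)^{\perp}$ is injectively resolving), so $K\in\GP(R)^{\perp}$. Hence $\Ext_R^1(N,K)=0$, the sequence splits, and $N$ is a direct summand of the Gorenstein projective module $G$; since $\GP(R)$ is closed under direct summands \cite[Theorem 2.5]{Holm}, $N\in\GP(R)$.

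For completeness I first record that the very sequence $0\rightarrow K\rightarrow G\rightarrow N\rightarrow 0$ produced above is a special $\GP(R)$-precover of $N$, because $G\in\GP(R)$ and $K\in\GP(R)^{\perp}$; as $\Gpd_R$ is everywhere finite, \emph{every} module has such a special precover. To manufacture special $\GP(R)^{\perp}$-preenvelopes I would then appeal to Salce's Lemma (see \cite{Enocks and janda}). Concretely, given $M$, embed it in an injective, $0\rightarrow M\rightarrow E\rightarrow C\rightarrow 0$, and note $E\in\GP(R)^{\perp}$ automatically since $\Ext_R^1(G,E)=0$ for every module $G$. Take a special precover $0\rightarrow K'\rightarrow G'\rightarrow C\rightarrow 0$ of $C$ and form the pullback $P$ of $E\rightarrow C\leftarrow G'$. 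Then $0\rightarrow K'\rightarrow P\rightarrow E\rightarrow 0$ shows $P\in\GP(R)^{\perp}$ (closure under extensions), while $0\rightarrow M\rightarrow P\rightarrow G'\rightarrow 0$ has cokernel $G'\in\GP(R)={}^{\perp}(\GP(R)^{\perp})$ and is therefore the required special preenvelope. This gives both halves of completeness.

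The hard part is the reverse inclusion $^{\perp}(\GP(R)^{\perp})\subseteq\GP(R)$: an arbitrary module orthogonal to $\GP(R)^{\perp}$ has no reason to be Gorenstein projective, and it is exactly the hypothesis $l.\Ggldim(R)<\infty$ — forcing finite Gorenstein projective dimension and hence the splitting approximation of \cite[Theorem 2.10]{Holm} — that bridges the gap. Once this inclusion holds, completeness is formal through Salce's Lemma and heredity is immediate from Theorem \ref{thm1}. The Gorenstein injective statement then follows by dualizing every step, replacing the Gorenstein projective precover with a Gorenstein injective preenvelope whose cokernel has finite injective dimension, and using parts (2) and (3) of Theorem \ref{thm1}.
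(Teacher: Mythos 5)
Your proposal is correct and follows essentially the same route as the paper: both rest on the approximation of \cite[Theorem 2.10]{Holm} (valid since $l.\Ggldim(R)<\infty$ forces $\Gpd_R(M)<\infty$ for every $M$), the splitting argument showing $^{\perp}(\GP(R)^{\perp})\subseteq \GP(R)$, and heredity via the resolving properties of Theorem \ref{thm1}. The only difference is cosmetic: where the paper obtains special $\GP(R)^{\perp}$-preenvelopes by citing \cite[Lemma 1.13]{Trlifaj}, you unfold that citation into the explicit Salce pullback construction (and you also justify $K\in\GP(R)^{\perp}$ for $\pd_R(K)<\infty$, which the paper asserts without comment).
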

\begin{proof} $(1).$ To prove that $(\GP(R),\GP(R)^{\perp})$  is cotorsion theory, we have
to prove that $^{\perp}(\GP(R)^{\perp})=\GP(R)$. Let $M$ be in $^{\perp}(\GP(R)^{\perp})$.
Thus, $\Ext_R^1(M,N)=0$ for all $N\in \GP(R)^{\perp}$. Since $\Gpd_R(M)<\infty$ and from
\cite[Theorem 2.10]{Holm}, $M$ admits a surjective $\GP(R)$-precover $\varphi:
G\rightarrow M$, where $K=\ker(\varphi)$ satisfies $\pd_R(K)<\infty$. Since $K\in
\GP(R)^{\perp}$, $G$ is a special $\GP(R)$-precover and this short exact sequence splits
since $\Ext_R^1(M,K)=0$. Thus,  $M$ is a direct summand of $G$. Hence, $M$ is Gorenstein
projective (from \cite[Theorem 2.5]{Holm}). Consequently,
$^{\perp}(\GP(R)^{\perp})\subseteq\GP(R)$. The other inclusion is clear. Therefore
$(\GP(R),\GP(R)^{\perp})$ is  cotorsion theory and every $R$-module has a special
$\GP(R)$-precover (and so every $R$-module has a special $\GP(R)^{\perp}$-preenvelope from
\cite[Lemma 1.13]{Trlifaj}).  This imply that $(\GP(R),\GP(R)^{\perp})$ is complete, as
desired.\\
Since $\GP(R)$ is projectively resolving and $\GP(R)^{\perp}$ is injectively resolving,
this torsion is hereditary.\\
$(2).$ To prove the dual Gorenstein injective result, we use \cite[Theorems 2.6 and
2.15]{Holm}.
\end{proof}
\begin{proposition}\label{prop2}
If  $l.\Ggldim(R)<\infty$, then, 
$$\GP(R)^{\perp}=\overline{\mathcal{P}(R)}=\overline{\mathcal{I}(R)}=\;^{\perp}\GI(R).$$
\end{proposition}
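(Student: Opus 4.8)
The plan is to establish the two ``outer'' equalities $\GP(R)^{\perp}=\overline{\mathcal{P}(R)}$ and $^{\perp}\GI(R)=\overline{\mathcal{I}(R)}$ separately, and then to read off the middle equality $\overline{\mathcal{P}(R)}=\overline{\mathcal{I}(R)}$ for free from the two cross-inclusions already recorded in the preceding Corollary. Throughout I would use that $l.\Ggldim(R)<\infty$ forces $\Gpd_R(M)<\infty$ and $\Gid_R(M)<\infty$ for \emph{every} module $M$, so that Holm's special precover and preenvelope theorems apply to every module.

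For the inclusion $\overline{\mathcal{P}(R)}\subseteq\GP(R)^{\perp}$ (valid over any ring) I would argue by induction on $\pd_R(L)$ that $\Ext_R^i(G,L)=0$ for every $G\in\GP(R)$ and every $i>0$, the base case being the vanishing against projectives built into the definition of $\GP(R)$; the inclusion $\overline{\mathcal{I}(R)}\subseteq\GP(R)^{\perp}$ is exactly what was shown inside the preceding Corollary. The heart of the argument is the reverse inclusion $\GP(R)^{\perp}\subseteq\overline{\mathcal{P}(R)}$. Given $M\in\GP(R)^{\perp}$, since $\Gpd_R(M)<\infty$ I would invoke \cite[Theorem 2.10]{Holm} to obtain a short exact sequence $0\rightarrow K\rightarrow G\rightarrow M\rightarrow 0$ with $G\in\GP(R)$ and $\pd_R(K)<\infty$, so that $K\in\overline{\mathcal{P}(R)}\subseteq\GP(R)^{\perp}$. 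Applying $\Hom_R(G',-)$ for an arbitrary $G'\in\GP(R)$, the long exact sequence has outer terms $\Ext_R^1(G',K)$ and $\Ext_R^1(G',M)$ equal to zero (both $K,M\in\GP(R)^{\perp}$), whence $\Ext_R^1(G',G)=0$. Thus $G\in\GP(R)\cap\GP(R)^{\perp}=\mathcal{P}(R)$ by Corollary \ref{coro1}, i.e. $G$ is projective, and the sequence gives $\pd_R(M)\leq\pd_R(K)+1<\infty$, as required.

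The Gorenstein injective equality $^{\perp}\GI(R)=\overline{\mathcal{I}(R)}$ is entirely dual: starting from $M\in{}^{\perp}\GI(R)$ with $\Gid_R(M)<\infty$, I would take a special Gorenstein injective preenvelope $0\rightarrow M\rightarrow H\rightarrow C\rightarrow 0$ with $H\in\GI(R)$ and $\id_R(C)<\infty$ (cf. \cite[Theorem 2.15]{Holm}), conclude $H\in\GI(R)\cap{}^{\perp}\GI(R)=\mathcal{I}(R)$ by the Gorenstein injective form of Corollary \ref{coro1}, and read off $\id_R(M)\leq\id_R(C)+1<\infty$. Finally, combining the two outer equalities with the cross-inclusions from the Corollary, namely $\overline{\mathcal{I}(R)}\subseteq\GP(R)^{\perp}$ and $\overline{\mathcal{P}(R)}\subseteq{}^{\perp}\GI(R)$, gives $\overline{\mathcal{I}(R)}\subseteq\GP(R)^{\perp}=\overline{\mathcal{P}(R)}$ and $\overline{\mathcal{P}(R)}\subseteq{}^{\perp}\GI(R)=\overline{\mathcal{I}(R)}$, hence $\overline{\mathcal{P}(R)}=\overline{\mathcal{I}(R)}$. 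The only genuinely non-formal point is the key step above, where Corollary \ref{coro1} is used to upgrade the Gorenstein projective term $G$ to an honest projective module; once that is in hand, the dualization and the middle equality come out automatically.
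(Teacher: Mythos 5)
Your proposal is correct, but it follows a genuinely different route from the paper's. The paper never shows $\GP(R)^{\perp}\subseteq\overline{\mathcal{P}(R)}$ directly: instead, for $M\in\GP(R)^{\perp}$ it bounds the \emph{injective} dimension of $M$ by dimension shifting against an arbitrary test module $N$ (since $\Gpd_R(N)\leq n=l.\Ggldim(R)$, Holm's Theorem 2.20 gives $\Ext_R^{j+n}(N,M)=\Ext_R^j(G,M)=0$, whence $\id_R(M)\leq n$), and then closes the loop by importing the equality $\overline{\mathcal{P}(R)}=\overline{\mathcal{I}(R)}$ from Bennis--Mahdou (their Corollary 2.7). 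You instead prove $\GP(R)^{\perp}\subseteq\overline{\mathcal{P}(R)}$ directly: take the special precover sequence $0\rightarrow K\rightarrow G\rightarrow M\rightarrow 0$ from Holm's Theorem 2.10, observe that $K,M\in\GP(R)^{\perp}$ forces $\Ext_R^1(G',G)=0$ for all $G'\in\GP(R)$ by the three-term exactness of the long exact sequence, and conclude $G\in\GP(R)\cap\GP(R)^{\perp}=\mathcal{P}(R)$ by Corollary \ref{coro1}, so $\pd_R(M)\leq\pd_R(K)+1<\infty$; the dual argument handles $^{\perp}\GI(R)$, and the middle equality $\overline{\mathcal{P}(R)}=\overline{\mathcal{I}(R)}$ then falls out formally from the two cross-inclusions already established in the paper's second corollary. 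The trade-off: the paper's proof is shorter and yields the quantitative bound $\id_R(M)\leq l.\Ggldim(R)$, but it depends on an external result; your proof is self-contained modulo Holm's precover/preenvelope theorems and the paper's own earlier results, and as a byproduct it actually reproves the Bennis--Mahdou equality $\overline{\mathcal{P}(R)}=\overline{\mathcal{I}(R)}$ under the hypothesis $l.\Ggldim(R)<\infty$. (Your trick of upgrading the Gorenstein projective term of the special precover to an honest projective is essentially the same mechanism the paper uses later, in the proof of its main cotorsion-theory theorem, to split off $M$ as a direct summand.) One cosmetic caution: the paper's Corollary \ref{coro1}(2) is misprinted as $\mathcal{I}(R)=\GI(R)\cap\GI(R)^{\perp}$; the correct dual statement, which you use, is $\mathcal{I}(R)=\GI(R)\cap\,^{\perp}\GI(R)$, so you are relying on the intended rather than the literal form.
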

\begin{proof} Clearly $\overline{\mathcal{P}(R)}\subseteq \GP(R)^{\perp}$ (from \cite[Proposition 2.3]{Holm}). Now, let $M\in
\GP(R)^{\perp}$ and $N$ an arbitrary $R$-module and set $n=l.\Ggldim(R)$. Then,
$\Gpd_R(N)\leq n$. So, from \cite[Theorem 2.20]{Holm}, we can find an exact sequence
$$0\rightarrow G \rightarrow P_n\rightarrow ... \rightarrow P_1\rightarrow N  \rightarrow
0$$ where all $P_i$ are projective and $G$ is Gorenstein projective. Thus,
$\Ext_R^{j+n}(N,M)=\Ext_R^j(G,M)=0$ for all $j>0$ (by Theorem \ref{thm1}). Then,
$\id_R(M)\leq n$. Using \cite[Corollary 2.7]{Bennis and Mahdou2},
$\overline{\mathcal{P}(R)}=\overline{\mathcal{I}(R)}$ since $l.\Ggldim(R)<\infty$. Then, $M\in
\overline{\mathcal{P}(R)}$. Similarly, we have $^{\perp}\GI(R)=\overline{\mathcal{I}(R)}$.
This complete the proof.
\end{proof}
Recall that the finitistic projective dimension of $R$ is the global dimension defined
as:
$$\FPD(R)=\sup\{\pd_R(M)\mid \text{M is an $R$-module with } \pd_R(M)<\infty\}$$
\begin{proposition}\label{prop3}
If $\GP(R)=$$^{^{\perp}}(\overline{\mathcal{P}(R)})$ and
$\GP(R)^{\perp}=\overline{\mathcal{P}(R)}$, then $\FPD(R)=l.\Ggldim(R)$.
\end{proposition}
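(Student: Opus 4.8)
The plan is to prove the two inequalities $\FPD(R)\le l.\Ggldim(R)$ and $l.\Ggldim(R)\le\FPD(R)$ separately. The first is unconditional and immediate: if $\pd_R(M)<\infty$ then $\Gpd_R(M)=\pd_R(M)$ (\cite{Holm}), so $\pd_R(M)=\Gpd_R(M)\le l.\Ggldim(R)$; taking the supremum over all modules of finite projective dimension gives $\FPD(R)\le l.\Ggldim(R)$. Notice that this half uses neither hypothesis.

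For the reverse inequality I would first dispose of the case $\FPD(R)=\infty$, where the first inequality already forces $l.\Ggldim(R)=\infty=\FPD(R)$. So assume $n:=\FPD(R)<\infty$ and aim to show $\Gpd_R(M)\le n$ for every module $M$. Pick an exact sequence $0\to K\to P_{n-1}\to\cdots\to P_0\to M\to 0$ with the $P_i$ projective. Dimension shifting gives $\Ext_R^1(K,X)\cong\Ext_R^{n+1}(M,X)$ for every $X$. Here the first hypothesis $\GP(R)={}^{\perp}(\overline{\mathcal{P}(R)})$ is exactly what lets me promote an $\Ext$-vanishing statement to Gorenstein projectivity: $K\in\GP(R)$ as soon as $\Ext_R^1(K,X)=0$ for all $X\in\overline{\mathcal{P}(R)}$, and then $\Gpd_R(M)\le n$. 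Thus the whole reverse inequality reduces to the single claim that
$$\Ext_R^{n+1}(M,X)=0\quad\text{for every module }M\text{ and every }X\in\overline{\mathcal{P}(R)},$$
equivalently, that every module of finite projective dimension has injective dimension at most $n=\FPD(R)$.

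To attack this I would use the second hypothesis. By $\GP(R)^{\perp}=\overline{\mathcal{P}(R)}$ together with Theorem \ref{thm1}(4), the class $\overline{\mathcal{P}(R)}$ is injectively resolving, so $\mathcal{I}(R)\subseteq\overline{\mathcal{P}(R)}$; that is, every injective module has finite projective dimension, necessarily $\le\FPD(R)=n$. Taking an injective coresolution $0\to X\to I^0\to I^1\to\cdots$ of a given $X\in\overline{\mathcal{P}(R)}$ and again using that $\overline{\mathcal{P}(R)}$ is injectively resolving, one checks inductively that every cosyzygy $K^j$ lies in $\overline{\mathcal{P}(R)}$, while $\Ext_R^{n+1}(M,X)\cong\Ext_R^1(M,K^n)$. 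So the homological bookkeeping is routine, and the content concentrates in one final step.

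The main obstacle is precisely that last step: upgrading the information that the $I^j$ and the cosyzygies $K^j$ have bounded projective dimension to the conclusion that $K^n$ is injective, i.e. that $\id_R(X)\le n$. This is a genuine Gorenstein/Bass-type input — one must pass from ``injectives have bounded projective dimension'' to ``finite-projective-dimension modules have bounded injective dimension'' (equivalently, that projective modules have injective dimension $\le n$), and this self-duality does not follow from the homological shifting alone. I expect to secure it from the finiteness theory of the Gorenstein global dimension: the hypotheses should first force $l.\Ggldim(R)<\infty$, after which the equality $\overline{\mathcal{P}(R)}=\overline{\mathcal{I}(R)}$ of Proposition \ref{prop2} (\cite[Corollary 2.7]{Bennis and Mahdou2}) together with the dual description $l.\Ggldim(R)=\sup\{\Gid_R(M)\}$ pins down $l.\Ggldim(R)=\FPD(R)$ exactly. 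Thus the essential point to secure is the injective-dimension bound of the previous paragraph, equivalently the finiteness of $l.\Ggldim(R)$; that transfer of finiteness, rather than the formal manipulations, is where the real work lies.
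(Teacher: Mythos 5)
Your formal reductions are correct as far as they go: the inequality $\FPD(R)\le l.\Ggldim(R)$ is indeed unconditional, and dimension shifting together with the hypothesis $\GP(R)={}^{\perp}(\overline{\mathcal{P}(R)})$ correctly reduces the reverse inequality to the claim that $\id_R(X)\le n$ for every $X\in\overline{\mathcal{P}(R)}$, where $n=\FPD(R)$. But there the proof stops. You write that you ``expect'' the hypotheses to force $l.\Ggldim(R)<\infty$ and that this ``is where the real work lies'' --- and that is exactly the problem: this finiteness \emph{is} the content of the proposition, and nothing in your argument produces it. The observation that $\mathcal{I}(R)\subseteq\overline{\mathcal{P}(R)}$ (via Theorem \ref{thm1}(4)) is true but, as you yourself note, it does not bound the injective dimension of modules of finite projective dimension; and Proposition \ref{prop2} cannot be invoked at this point, since its hypothesis is precisely the finiteness you are trying to establish. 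So the proposal is an honest reduction, not a proof.

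The missing idea, which is how the paper closes this gap, is to exploit the fact that your two hypotheses say exactly that $(\GP(R),\overline{\mathcal{P}(R)})$ is the cotorsion theory generated by $\overline{\mathcal{P}(R)}$. Trlifaj's completeness results (\cite[Theorem 2.2]{Trlifaj} together with \cite[Lemma 1.13]{Trlifaj}) then provide every $R$-module $M$ with a special $\GP(R)$-precover, i.e., an exact sequence $0\rightarrow K\rightarrow G\rightarrow M\rightarrow 0$ with $G$ Gorenstein projective and $K\in\GP(R)^{\perp}=\overline{\mathcal{P}(R)}$. If $\FPD(R)=n<\infty$, then $\pd_R(K)\le n$, hence $\Gpd_R(M)\le n+1$ for \emph{every} module $M$, so $l.\Ggldim(R)\le n+1<\infty$. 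Once finiteness is in hand, \cite[Theorem 2.28]{Holm} (the finitistic Gorenstein projective dimension equals $\FPD(R)$) yields $l.\Ggldim(R)=\sup\{\Gpd_R(M)\mid\Gpd_R(M)<\infty\}=\FPD(R)$. In short, the ``transfer of finiteness'' you flagged comes from completeness of the cotorsion theory --- the existence of special $\GP(R)$-precovers for arbitrary modules --- and not from homological shifting; without that input your argument cannot close.
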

\begin{proof}
From \cite[Theorem 2.2]{Trlifaj}, every $R$-module admits a special
$\GP(R)^{\perp}$-preenvelope. On the other hand, by hypothesis, $(\GP(R),\GP(R)^{\perp})$
is  the cotorsion theory generated by $\overline{\mathcal{P}(R)}$ (see \cite[Defintion
1.10]{Trlifaj}). Then, $(\GP(R),\overline{\mathcal{P}(R)})$ is a cotorsion theory. Thus,
from \cite[Lemma 1.13]{Trlifaj}, every $R$-module $M$ has a special $\GP(R)$-precover.\\
The inequality $\FPD(R)\leq l.\Ggldim(R)$ follows from \cite[Theorem 2.28]{Holm}. Now,
suppose that $\FPD(R)\leq n$ and let $M$ be an arbitrary $R$-module. We claim prove that
$l.\Ggldim(R)<\infty$. From the first part of the proof, $M$ admits a special
$\GP(R)$-precover. Then, there is an exact sequence $0\rightarrow K \rightarrow G
\rightarrow M \rightarrow 0$ where $G$ is Gorenstein projective and $K\in
\GP(R)^{\perp}=\overline{\mathcal{P}(R)}$. Thus, $\pd_R(K)\leq n$ and so $\Gpd_R(M)\leq
n+1$. Hence, $l.\Ggldim(R)\leq n+1<\infty$. Then, $l.\Ggldim(R)= \sup\{\Gpd_R(M \mid
\Gpd_R(M)<\infty\}=\FPD(R)$ (from \cite[Theorem 2.28]{Holm}), as desired.
\end{proof}
Then, we conclude the following characterization of the left Gorenstein global dimension
provided $\FPD(R)<\infty$.
\begin{corollary} If $\FPD(R)<\infty$, then the following are equivalents:
\begin{enumerate}
  \item $l.\Ggldim(R)<\infty$.
  \item $\GP(R)=$$^{^{\perp}}\overline{\mathcal{P}(R)}$ and
      $\GP(R)^{\perp}=\overline{\mathcal{P}(R)}$.
\end{enumerate}
\end{corollary}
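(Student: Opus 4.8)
The plan is to establish the two implications separately, in each case feeding the relevant hypothesis into a result already proved above; no new homological computation is required, so the work is essentially bookkeeping of which earlier statement supplies each inclusion.

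For $(1)\Rightarrow(2)$ I would assume $l.\Ggldim(R)<\infty$ and apply Proposition \ref{prop2} directly to obtain $\GP(R)^{\perp}=\overline{\mathcal{P}(R)}$, which is the second half of $(2)$. For the first half I would invoke the main Theorem: since $l.\Ggldim(R)<\infty$, the pair $(\GP(R),\GP(R)^{\perp})$ is a (complete hereditary) cotorsion theory, and by the very definition of a cotorsion theory this gives ${}^{\perp}(\GP(R)^{\perp})=\GP(R)$. Substituting the equality $\GP(R)^{\perp}=\overline{\mathcal{P}(R)}$ just obtained yields $\GP(R)={}^{\perp}\overline{\mathcal{P}(R)}$, which completes $(2)$. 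Note that this direction does not use the finiteness of $\FPD(R)$ at all.

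For $(2)\Rightarrow(1)$ I would observe that the two equalities in $(2)$ are precisely the hypotheses of Proposition \ref{prop3}, so that proposition applies and gives $\FPD(R)=l.\Ggldim(R)$. The standing assumption $\FPD(R)<\infty$ then forces $l.\Ggldim(R)<\infty$, which is $(1)$.

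I do not anticipate a genuine obstacle, but one point deserves care: the logical dependence on $\FPD(R)<\infty$ is asymmetric. It is used only in the second implication, to pass from the equality $\FPD(R)=l.\Ggldim(R)$ to the finiteness of $l.\Ggldim(R)$, whereas the first implication is unconditional. I would also double-check that the step ``$(\GP(R),\GP(R)^{\perp})$ a cotorsion theory $\Rightarrow {}^{\perp}(\GP(R)^{\perp})=\GP(R)$'' is quoted in the exact form in which the cotorsion-theory axioms were stated in the introduction, since that identification is the only nontrivial input on the $(1)\Rightarrow(2)$ side.
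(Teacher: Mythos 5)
Your proof is correct, and its overall skeleton matches the paper's: both directions reduce to results already established, with Proposition \ref{prop2} supplying $\GP(R)^{\perp}=\overline{\mathcal{P}(R)}$ and Proposition \ref{prop3} (together with $\FPD(R)<\infty$) giving $(2)\Rightarrow(1)$. The one genuine divergence is how the first equality $\GP(R)={}^{\perp}\overline{\mathcal{P}(R)}$ is obtained in $(1)\Rightarrow(2)$: the paper quotes Holm's Theorem 2.20 directly (which characterizes modules of finite Gorenstein projective dimension by Ext-vanishing against modules of finite projective dimension), whereas you derive it internally, from the paper's main Theorem that $(\GP(R),\GP(R)^{\perp})$ is a cotorsion theory --- so ${}^{\perp}(\GP(R)^{\perp})=\GP(R)$ by the very axioms --- followed by substituting $\GP(R)^{\perp}=\overline{\mathcal{P}(R)}$. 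Both are valid; your route has the advantage of being self-contained within the paper and of making transparent that the perp operator only depends on the class it is applied to, while the paper's citation is more direct and avoids invoking the completeness machinery at all. Your observation that $\FPD(R)<\infty$ is used only in the direction $(2)\Rightarrow(1)$ is also accurate and consistent with the paper, where Proposition \ref{prop2} and the main Theorem need only $l.\Ggldim(R)<\infty$.
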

\begin{proof}
$(1\Rightarrow 2).$ The first equality follows from \cite[Theorem 2.20]{Holm} and the
second from from Proposition \ref{prop2}.\\
$(2\Rightarrow 1).$ Follows from Proposition \ref{prop3}.
\end{proof}
Now, we discuss the rings over which "\emph{every Gorenstein projective module is
Gorenstein flat}".

\begin{proposition}\label{prop}
For any ring $R$, the following are equivalents:
\begin{enumerate}
  \item Every Gorenstein projective module is Gorenstein flat.
  \item $I^+\in \GP(R)^{\perp}$ for every right injective module $I$.
  \item $F^{++}\in \GP(R)^{\perp}$ for every flat module $F$.
\end{enumerate}
\end{proposition}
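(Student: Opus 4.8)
The plan is to translate the two orthogonality conditions $(2)$ and $(3)$ into the vanishing of suitable $\Tor$-groups and then to compare the two families of test modules directly. The basic tool is the standard natural isomorphism
$$\Ext_R^i(G,A^+)\cong \big(\Tor_i^R(A,G)\big)^+$$
valid for every left module $G$ and every right module $A$ (it follows from the adjunction $\Hom_{\mathbb{Z}}(A\otimes_R G,\mathbb{Q}/\mathbb{Z})\cong \Hom_R(G,A^+)$ together with the exactness of $(-)^+$, as $\mathbb{Q}/\mathbb{Z}$ is $\mathbb{Z}$-injective). Since $\mathbb{Q}/\mathbb{Z}$ is moreover a cogenerator, $N^+=0$ forces $N=0$, so $\Ext_R^i(G,A^+)=0$ if and only if $\Tor_i^R(A,G)=0$. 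Taking $A=I$ a right injective module shows that $(2)$ is equivalent to the vanishing of $\Tor_i^R(I,G)$ for all $i\geq1$, all right injective $I$ and all $G\in\GP(R)$ (the passage from $i=1$ to all $i$ being supplied by Theorem \ref{thm1}$(1)$); taking $A=F^+$ shows that $(3)$ is equivalent to the vanishing of $\Tor_i^R(F^+,G)$ for all flat $F$ and all $G\in\GP(R)$.

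For $(1)\Leftrightarrow(2)$ I would argue through complete resolutions. A Gorenstein projective module $G$ admits a complete projective resolution $\mathbf{P}$, all of whose syzygies are again Gorenstein projective and whose terms are in particular flat. Thus $\mathbf{P}$ is a complete flat resolution of $G$ exactly when $I\otimes_R\mathbf{P}$ stays exact for every right injective $I$, and breaking $\mathbf{P}$ into short exact sequences shows that this happens precisely when $\Tor_1^R(I,Z)=0$ for every syzygy $Z$. As these syzygies are all Gorenstein projective, the reformulation of $(2)$ above guarantees this, so $(2)$ forces every $G\in\GP(R)$ to be Gorenstein flat. Conversely, if $(1)$ holds then each $G\in\GP(R)$ is Gorenstein flat, and the left half of a complete flat resolution is a flat resolution of $G$ along which $\Tor_i^R(I,G)$ may be computed; its exactness after applying $I\otimes_R-$ yields $\Tor_i^R(I,G)=0$ for $i\geq1$ and injective $I$ (see \cite{Holm}). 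Hence $(1)\Leftrightarrow(2)$.

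The implication $(2)\Rightarrow(3)$ is immediate: if $F$ is flat then $F^+$ is a right injective module, so $(2)$ applied to $I=F^+$ gives $F^{++}=(F^+)^+\in\GP(R)^{\perp}$. The substantial point is $(3)\Rightarrow(2)$, and this is where I expect the main obstacle, since the test modules $F^+$ occurring in $(3)$ are only those right injectives that arise as character modules of flat modules, whereas $(2)$ must handle \emph{every} right injective $I$. The device I would use is to exhibit an arbitrary right injective $I$ as a direct summand of such an $F^+$: choosing a free (hence flat) left module $F$ with a surjection $F\twoheadrightarrow I^+$ and dualizing gives a monomorphism $I^{++}\hookrightarrow F^+$, and precomposing with the canonical monomorphism $I\hookrightarrow I^{++}$ produces a monomorphism $I\hookrightarrow F^+$. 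Because $I$ is injective this monomorphism splits, so $I$ is a direct summand of $F^+$.

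It then remains only to let $\Tor$ inherit this splitting: since $\Tor_i^R(-,G)$ is additive, $\Tor_1^R(I,G)$ is a direct summand of $\Tor_1^R(F^+,G)$, which vanishes for every $G\in\GP(R)$ by the reformulation of $(3)$. Hence $\Tor_1^R(I,G)=0$, and translating back through the duality isomorphism gives $\Ext_R^1(G,I^+)=0$ for all $G\in\GP(R)$, that is $I^+\in\GP(R)^{\perp}$. This establishes $(2)$ and closes the chain of equivalences $(1)\Leftrightarrow(2)\Leftrightarrow(3)$.
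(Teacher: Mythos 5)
Your proposal is correct and takes essentially the same approach as the paper's proof: the same character-module duality $\Ext_R^1(G,A^+)\cong(\Tor_1^R(A,G))^+$, the same complete-resolution and syzygy argument for $(1)\Leftrightarrow(2)$, the same trivial step $(2)\Rightarrow(3)$, and the same trick of splitting $I$ off as a direct summand of $F^+$ for $(3)\Rightarrow(2)$. The only difference is cosmetic: in that last step the paper dualizes once more, making $I^+$ a direct summand of $F^{++}$ and invoking closure of $\GP(R)^{\perp}$ under direct summands, whereas you conclude at the level of $\Tor$ via additivity; the two arguments are interchangeable.
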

\begin{proof}
$(1\Rightarrow2).$ Let $I$ be a right injective $R$-module. Since every Gorenstein
projective $R$-module is Gorenstein flat and by definition of Gorenstein flat module, we
have $\Tor_R^1(I,G)=0$ for all $G\in \GP(R)$. By adjointness, we have
$\Ext_R^1(G,I^+)=(\Tor_R^1(I,G))^+=0$. Hence, $I^+\in \GP(R)^{\perp}$, as desired.\\
$(2\Rightarrow 1)$.Consider a complete projective resolution $$\textbf{P=} ...\rightarrow
P_{-1}\xrightarrow{f_{-1}} P_0\xrightarrow{f_0} P_1 \xrightarrow{f_1} ...$$
 We decompose it into short exact sequences $0\rightarrow G_i\rightarrow P_i \rightarrow
 G'_i \rightarrow 0$ where $G_i=\ker(f_i)$ and $G'_i=\im(f_i)$. From \cite[Observation
 2.2]{Holm}, $G_i$ and $G'_i$ are Gorenstein projective.  Now let $I$ be an injective
 right $R$-module. We have $\Tor_R^1(I,G')=0$ since
 $(\Tor_R^1(I,G'))^+=\Ext_R^1(G',I^+)=0$. Thus, $$0\rightarrow I\otimes_RG'\rightarrow
 I\otimes_RP_i \rightarrow I\otimes_RG'\rightarrow 0$$ is exact. So, $I\otimes_R-$ keep
 the exactness of $\textbf{P}$. Then, it is a complete flat resolution. Consequently,
 every Gorenstein projective module is Gorenstein flat, as desired.\\
$(2\Rightarrow 3).$ Let $F$ be a flat $R$-module. Then, $F^+$ is right
injective. So, $F^{++}\in \GP(R)^{\perp}$.\\
$(3\Rightarrow 2).$ Let $I$ be a right injective $R$-module. There exist a flat $R$-module
$F$ such that $F\rightarrow I^+\rightarrow 0$ is exact. Then, $0\rightarrow
I^{++}\rightarrow F^+$ is exact. But $0\rightarrow I \rightarrow I^{++}$ is exact (by
\cite[Proposition 3.52]{Faith}). Then, $0\rightarrow I \rightarrow F^{+}$ is exact and
then $I$ is a direct summand of $F^+$. Hence, $I^+$ is a direct summand of $F^{++}$. On
the other hand, it is easy to see that $\GP(R)^{\perp}$ is closed under direct summands.
Thus, $I^+\in \GP(R)^{\perp}$, as desired.
\end{proof}
\begin{proposition} For any ring $R$,  $\sup\{\Gfd_R(M)\mid M \text{ is Gorenstein
projective}\}=0\text{ or } \infty$.
\end{proposition}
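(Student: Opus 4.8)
The plan is to establish the dichotomy in its sharper, contrapositive form: I claim that if the supremum
$$n:=\sup\{\Gfd_R(M)\mid M\text{ is Gorenstein projective}\}$$
is finite, then in fact $n=0$. Since a supremum that fails to be finite is by definition $\infty$, this immediately yields the two permitted values. So I would assume $n<\infty$ and, aiming for a contradiction, suppose $n\geq 1$.

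The first step is to attach to an \emph{arbitrary} Gorenstein projective module $G$ a short exact sequence $0\rightarrow G\rightarrow P\rightarrow G''\rightarrow 0$ in which $P$ is projective and $G''$ is again Gorenstein projective. This is read straight off the complete projective resolution $\mathbf{P}$ defining $G$: writing $G\cong\im(P_0\rightarrow P^0)=\ker(P^0\rightarrow P^1)$, the inclusion $G\hookrightarrow P^0$ has cokernel $\im(P^0\rightarrow P^1)$, one of the images occurring along $\mathbf{P}$ and hence Gorenstein projective (exactly as in the proof of Proposition \ref{prop}, via \cite[Observation 2.2]{Holm}). I would put $P=P^0$ and $G''=\im(P^0\rightarrow P^1)$.

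The second step feeds this sequence into the standard short exact sequence estimate for Gorenstein flat dimension: for $0\rightarrow M'\rightarrow M\rightarrow M''\rightarrow 0$, whenever $\Gfd_R(M)$ and $\Gfd_R(M'')$ are finite so is $\Gfd_R(M')$, and $\Gfd_R(M')\leq\max\{\Gfd_R(M),\Gfd_R(M'')-1\}$ (see \cite{Holm}). Here $P$ is projective, hence flat, hence Gorenstein flat, so $\Gfd_R(P)=0$; and $G''$ is Gorenstein projective, so $\Gfd_R(G'')\leq n<\infty$. Both finiteness hypotheses hold, and the estimate gives $\Gfd_R(G)\leq\max\{0,\Gfd_R(G'')-1\}\leq\max\{0,n-1\}=n-1$, using $n\geq 1$. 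Since $G$ was an arbitrary Gorenstein projective module, this forces $n\leq n-1$, a contradiction; hence $n=0$, i.e. every Gorenstein projective module is Gorenstein flat.

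The only genuine content, and the step I would guard most carefully, is invoking the Gorenstein flat dimension estimate over an arbitrary ring: one must verify that its finiteness hypotheses really are met in our situation (they are, since $\Gfd_R(P)=0$ and $\Gfd_R(G'')\leq n$) and that the syzygy-type inequality holds in this generality rather than only under a coherence assumption on $R$. Equivalently, one may run the argument in the ascending direction: if some Gorenstein projective $G$ had $1\leq\Gfd_R(G)<\infty$, the companion short exact sequence estimates applied to the iterated images $G^{(k)}=\im(P^k\rightarrow P^{k+1})$ would give $\Gfd_R(G^{(k)})=\Gfd_R(G)+k$, exhibiting Gorenstein projective modules of unbounded Gorenstein flat dimension and hence $n=\infty$; this reformulation makes the $0$-or-$\infty$ dichotomy especially transparent.
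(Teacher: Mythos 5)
There is a genuine gap, and it is exactly the one you flagged and then waved through: the syzygy-type estimate $\Gfd_R(M')\leq\max\{\Gfd_R(M),\Gfd_R(M'')-1\}$ for a short exact sequence $0\rightarrow M'\rightarrow M\rightarrow M''\rightarrow 0$ is \emph{not} available over an arbitrary ring, and the proposition is asserted for any ring $R$. In Holm's paper \cite{Holm}, all such short exact sequence estimates for the Gorenstein flat dimension (Section 3 there) are derived from the fact that the class of Gorenstein flat modules is projectively resolving and closed under direct summands, and that fact is proved only under the hypothesis that $R$ is right coherent. Over a general ring, whether the Gorenstein flat modules even form a class closed under extensions (let alone a resolving class, which is what the kernel estimate needs) was an open problem at the time this paper was written; it cannot be settled by an elementary verification of the sort your last paragraph suggests. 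Your alternative ``ascending'' reformulation via the iterated cosyzygies $G^{(k)}$ suffers from the same defect, since the claimed equality $\Gfd_R(G^{(k)})=\Gfd_R(G)+k$ again rests on the same family of estimates. So as written, your argument proves the dichotomy only for right coherent rings, not for all rings.

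The paper's proof is designed precisely to sidestep this machinery. It never manipulates Gorenstein flat \emph{dimensions} through short exact sequences; it only uses the one consequence of $\Gfd_R(G)\leq n$ that is definitional and valid over any ring, namely $\Tor_R^{n+1}(I,G)=0$ for every injective right module $I$ (dimension shifting along a length-$n$ Gorenstein flat resolution, using that Gorenstein flat modules are $\Tor$-orthogonal to injectives by the definition of a complete flat resolution). Via the character module isomorphism $\Ext_R^{n+1}(G,I^+)=(\Tor_R^{n+1}(I,G))^+$ this becomes an $\Ext$-vanishing statement, which is then pushed down from degree $n+1$ to degree $1$ by the cosyzygy trick on the Gorenstein \emph{projective} side: writing $0\rightarrow G\rightarrow P_{n-1}\rightarrow\cdots\rightarrow P_0\rightarrow G'\rightarrow 0$ with $G'$ Gorenstein projective gives $\Ext_R^1(G,I^+)=\Ext_R^{n+1}(G',I^+)=0$. (Note this shift is legitimate over any ring because the class $\GP(R)$, unlike the Gorenstein flat class, has the needed cosyzygy structure built into its definition.) Then $I^+\in\GP(R)^{\perp}$ for all injective right $I$, and Proposition \ref{prop} converts this into ``every Gorenstein projective module is Gorenstein flat,'' i.e. the supremum is $0$. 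If you want to salvage your approach, you must either restrict the statement to right coherent rings or supply a proof of the resolving property of Gorenstein flat modules over arbitrary rings, which is a deep result and far beyond the tools used in this paper.
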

\begin{proof} Recall that if $\Gfd_R(M)\leq n$, we have $\Tor_R^i(I,M)=0$ for all $i>n$.
Indeed, the case $n=0$ is from the definition of Gorenstein flat modules and the case
$n>0$ is deduced from the first case by the $n$-step projective resolution of $M$.\\
Suppose that $\sup\{\Gfd_R(M)\mid M \text{ is Gorenstein projective}\}=n<\infty$. Then,
$\Ext_R^{n+1}(G,I^+)=(\Tor_R^{n+1}(I,G))^+=0$ for all injective right module $I$ and  all Gorenstein
projective module $G$. But for every Gorenstein projective module $G$ we
can find an exact sequence $0\rightarrow G \rightarrow P_{n-1}\rightarrow ...\rightarrow
P_0 \rightarrow G'  \rightarrow 0$ where all $P_i$ are projective and $G'$ is Gorenstein
projective. Thus, $\Ext_R^1(G,I^+)=\Ext_R^{n+1}(G',I^+)=0$. So, $I^+\in \GP(R)^{\perp}$
for every injective right module $I$. Then, from Proposition  \ref{prop}, every Gorenstein
projective is Gorenstein flat. Consequently, $\sup\{\Gfd_R(M)\mid M \text{ is Gorenstein
projective}\}=0$, as desired.
\end{proof}
A direct consequence of the above Proposition is the following Corollary:
\begin{corollary}
If $l.\wGgldim(R)<\infty$, then every Gorenstein projective $R$-module is Gorenstein
flat.
\end{corollary}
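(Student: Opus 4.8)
The plan is to read this off directly from the preceding Proposition, which carries all the content; the Corollary is simply its specialization under the hypothesis $l.\wGgldim(R)<\infty$. The point is that the quantity governed by that Proposition, namely $\sup\{\Gfd_R(M)\mid M \text{ is Gorenstein projective}\}$, is a supremum over a \emph{subclass} of all $R$-modules, hence is bounded above by the full weak Gorenstein global dimension.

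First I would record the inequality
$$\sup\{\Gfd_R(M)\mid M \text{ is Gorenstein projective}\}\leq \sup\{\Gfd_R(M)\mid M \text{ is an } R\text{-module}\}=l.\wGgldim(R).$$
The hypothesis makes the right-hand side finite, so the left-hand supremum is finite as well. Now the preceding Proposition applies: that supremum is always either $0$ or $\infty$, so in the present situation it must equal $0$.

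Finally I would unwind what $\sup\{\Gfd_R(M)\mid M \text{ is Gorenstein projective}\}=0$ means: it says precisely that $\Gfd_R(M)\leq 0$ for every Gorenstein projective module $M$, that is, every such module is Gorenstein flat. There is essentially no obstacle here; the only care needed is to keep straight that the whole dichotomy rests on Proposition \ref{prop} together with the adjointness identity $(\Tor_R^{n+1}(I,G))^+=\Ext_R^{n+1}(G,I^+)$ exploited in its proof, and that "$\sup=0$" is exactly the assertion of Gorenstein flatness for the entire class $\GP(R)$.
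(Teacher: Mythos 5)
Your proof is correct and is essentially the paper's own argument: the paper states this Corollary as a direct consequence of the preceding dichotomy Proposition, exactly as you do, by noting that $\sup\{\Gfd_R(M)\mid M \text{ is Gorenstein projective}\}\leq l.\wGgldim(R)<\infty$ forces that supremum to be $0$ rather than $\infty$. Nothing further is needed.
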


\bibliographystyle{amsplain}

\end{document}